\documentclass[12pt,amscd]{amsart}
\usepackage[all]{xy}
\usepackage{graphicx}
\usepackage{amsmath,amsxtra,amssymb,latexsym, amscd,amsthm}

\usepackage{tikz}
\usepackage{ytableau}

 \usepackage{indentfirst}
\usepackage[mathscr]{eucal}
  \usepackage[pagebackref=true]{hyperref}

\newtheorem{thm}{Theorem}[section]
\newtheorem{cor}[thm]{Corollary}
\newtheorem{lem}[thm]{Lemma}

\theoremstyle{definition}
\newtheorem{defn}[thm]{Definition}

\newtheorem{quest}[thm]{Question}

\numberwithin{equation}{section}

\DeclareMathOperator{\Ass}{Ass}

\def\k {\mathrm{k}}

\begin{document}

\title{V-numbers of symbolic powers of cover ideals of graphs}

\author{Thanh Vu}
\address{Institute of Mathematics, VAST, 18 Hoang Quoc Viet, Hanoi, Vietnam}
\email{vuqthanh@gmail.com}

\subjclass[2020]{13F55, 05E40}
\keywords{v-number; cover ideal; symbolic power}

\date{}

\dedicatory{Dedicated to Professor Le Tuan Hoa on the occasion of his 70th birthday}
\commby{}
%-----------------------------------------------------------
% -----------------------------------------------------------

\begin{abstract} Let $G$ be a simple graph with cover ideal $J(G)$ in a polynomial ring $S$ with $|V(G)|$ variables. We prove that the local $v$-number of the symbolic powers $J(G)^{(t)}$ is linear for all $t \ge 1$ when $G$ is bipartite, and quasi-linear with period two for all $t \ge 2$ when $G$ is non-bipartite. Furthermore, we provide explicit formulas for these invariants in terms of the combinatorial data of $G$.
\end{abstract}

\maketitle
\section{Introduction}
\label{sect_intro}
The $v$-number of an ideal was introduced by Cooper, Seceleanu, Tohaneanu, Vaz Pinto, and Villarreal \cite{CSTVV}. Since then, it has attracted considerable interest \cite{BM,BMS,FM,KNS,S}. Conca \cite{C} and, independently, Ficarra and Sgroi \cite{FS}, proved that the $v$-function of the powers of an ideal is eventually linear. Kumar, Nanduri, and Saha \cite{KNS} proved that the $v$-function of the symbolic powers of a monomial ideal is eventually quasi-linear.

In this work, we show that for cover ideals of graphs, the local $v$-number is linear for all $t\ge1$ when the graph is bipartite, and is quasi-linear of period $2$ for all $t\ge2$ when the graph is non-bipartite. The key idea of the proof is to use a recent interpretation of Chau, Ha, Jayanthan, and Vu \cite{CHJV}, which translates the problem of computing the $v$-numbers of symbolic powers of squarefree monomial ideals into the study of almost covers. We then establish a decomposition result for almost covers of graphs and use it to deduce our main results. We now introduce these concepts in more detail.

Let $S = \k[x_1, \ldots, x_n]$ be a standard graded polynomial ring over a field $\k$, and let $I$ be a non-zero homogeneous ideal of $S$. For an associated prime $P$ of $I$, the local $v$-number of $I$ at $P$ is defined by 
$$v_P(I) = \min \{d \ge 0 \mid \exists f \in S_d \text{ such that } I : f = P\}.$$
The $v$-number of $I$ is defined by
\[
v(I)=\min_{P\in\Ass(I)}v_P(I),
\]
where $\Ass(I)$ denotes the set of associated primes of $I$.

Let $S=\k[x_1,\dots,x_n]$ be a polynomial ring over a field $\k$, and let $G$ be a simple graph with vertex set $V(G)=[n]=\{1,\ldots,n\}$. Denote by $J(G)$ the cover ideal of $G$. 
\begin{defn}
Let $G$ be a simple graph, and let $P=\{i,j\}$ be an edge of $G$. 
\begin{enumerate}
    \item A vector $\mathbf{a}=(a_1,\ldots,a_n)\in\mathbb{N}^n$ is called a $t$-cover of $G$ if $a_u+a_v\ge t$ for every edge $\{u,v\}\in E(G)$.
    \item A $t$-cover $\mathbf{a}$ is called a $P$-tight $t$-cover if $a_i+a_j=t$.
    \item A vector $\mathbf{a}\in\mathbb{N}^n$ is called a $P$-almost $t$-cover if
    \[
    a_i+a_j=t-1 \quad \text{and} \quad a_u+a_v\ge t
    \]
    for every other edge $\{u,v\}\in E(G)$.
\end{enumerate}
\end{defn}

\begin{defn}
Let $G$ be a simple graph, and let $P = \{i,j\}$ be an edge of $G$. For every positive integer $t$, we define 
$$\gamma_{t,P} = \min \{ |\mathbf{b}| \mid \mathbf{b} \text{ is a } P\text{-tight } t\text{-cover of } G \}.$$
\end{defn}

Our main results are as follows.

\begin{thm}\label{thm_local_bipartite} 
Let $G$ be a bipartite graph, and let $P = \{i,j\}$ be an edge of $G$. We also denote by $P$ the ideal $(x_i,x_j)$ of $S$. Then for all $t \ge 1$, we have 
$$v_P(J(G)^{(t)}) = v_P(J(G)) + (t-1)\gamma_{1,P}.$$
\end{thm}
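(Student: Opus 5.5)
The plan is to reduce the statement to a combinatorial statement about almost covers, using the interpretation of Chau, Ha, Jayanthan and Vu \cite{CHJV}, and then to prove a decomposition result for almost covers of bipartite graphs. Since $J(G)^{(t)}=\bigcap_{\{u,v\}\in E(G)}(x_u,x_v)^t$, a monomial $x^{\mathbf a}$ satisfies $J(G)^{(t)}:x^{\mathbf a}=P$ exactly when $\mathbf a$ is a $P$-almost $t$-cover. Indeed, the colon $(x_u,x_v)^t:x^{\mathbf a}$ is the unit ideal if $a_u+a_v\ge t$, and it is contained in $(x_u,x_v)^{t-a_u-a_v}$ otherwise. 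I will cite \cite{CHJV} for the fact that the minimum in the definition of $v_P$ can be taken over monomials. With this, I would use
$$v_P(J(G)^{(t)})=\min\{|\mathbf a| \mid \mathbf a \text{ is a } P\text{-almost } t\text{-cover of } G\}.$$
In particular, $v_P(J(G))$ is the minimum of $|\mathbf c|$ over vectors $\mathbf c$ with $c_i=c_j=0$ that cover every other edge once.

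\emph{Upper bound.} Let $\mathbf c$ be a $P$-almost $1$-cover of minimal size, and let $\mathbf b$ be a $P$-tight $1$-cover with $|\mathbf b|=\gamma_{1,P}$. Then $\mathbf c+(t-1)\mathbf b$ is a $P$-almost $t$-cover. On $P$ its entries sum to $0+(t-1)=t-1$, and on every other edge they sum to at least $1+(t-1)=t$. Hence $v_P(J(G)^{(t)})\le v_P(J(G))+(t-1)\gamma_{1,P}$. This direction does not use bipartiteness.

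\emph{Lower bound.} I argue by induction on $t$; the case $t=1$ is trivial. The key step, which I expect to be the main obstacle, is the following peeling claim. For $t\ge2$ and any $P$-almost $t$-cover $\mathbf a$, there is a $P$-tight $1$-cover $\mathbf b\le\mathbf a$ such that $\mathbf a-\mathbf b$ is a $P$-almost $(t-1)$-cover. Granting this, we get $|\mathbf a|\ge|\mathbf a-\mathbf b|+\gamma_{1,P}\ge v_P(J(G))+(t-2)\gamma_{1,P}+\gamma_{1,P}$ by induction.

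To construct $\mathbf b$, fix a bipartition $X\sqcup Y$ of $V(G)$ with $i\in X$ and $j\in Y$. Choose a threshold $k$ with $1\le k\le t$, and set $b_u=1$ if either $u\in X$ and $a_u\ge k$, or $u\in Y$ and $a_u\ge t-k+1$; otherwise set $b_u=0$. Clearly $\mathbf b\le\mathbf a$.

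Take any edge $\{u,v\}\ne P$ with $u\in X$ and $v\in Y$. Since $a_u+a_v\ge t$, we cannot have both $a_u\le k-1$ and $a_v\le t-k$, so $\mathbf b$ covers this edge.
\begin{itemize}
\item If exactly one endpoint is flagged, then $(a-b)_u+(a-b)_v\ge t-1$.
\item If both endpoints are flagged, then $a_u+a_v\ge k+(t-k+1)=t+1$, so again $(a-b)_u+(a-b)_v\ge t-1$.
\end{itemize}

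It remains to choose $k$ so that $b_i+b_j=1$ exactly.
\begin{itemize}
\item If $a_i\ge1$, take $k=a_i\le t-1$. Then $b_i=1$, and $a_j=t-1-k<t-k+1$ gives $b_j=0$.
\item If $a_i=0$, take $k=t$. Then $b_i=0$, and $a_j=t-1\ge1=t-k+1$ gives $b_j=1$.
\end{itemize}
In both cases $\mathbf b$ is a $P$-tight $1$-cover. Moreover $(a-b)_i+(a-b)_j=t-2$, and every other edge has sum at least $t-1$. So $\mathbf a-\mathbf b$ is a $P$-almost $(t-1)$-cover, which proves the claim and hence the theorem.
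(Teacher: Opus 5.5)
Your proof is correct and is essentially the paper's argument: the same reduction to minimizing $|\mathbf a|$ over $P$-almost $t$-covers, and the same peeling step (Lemma~\ref{lem_decomp1}), which removes a $P$-tight $1$-cover using thresholds on the two sides of the bipartition (your $k$ and $t-k+1$ are its $r$ and $t+2-r$ after shifting $t$ by one). The differences are cosmetic: you fix one specific threshold where the paper allows any $r\neq c_i+1$, and you get the upper bound in one step by adding $(t-1)\mathbf b$ to an optimal $P$-almost $1$-cover rather than by induction.
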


\begin{thm}\label{thm_local_nonbipartite} 
Let $G$ be a simple graph, and let $P = \{i,j\}$ be an edge of $G$. We also denote by $P$ the ideal $(x_i,x_j)$ of $S$. Then for all $t \ge 2$, we have 
$$v_P(J(G)^{(t+2)}) = v_P(J(G)^{(t)}) + \gamma_{2,P}.$$
\end{thm}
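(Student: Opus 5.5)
We reduce Theorem~\ref{thm_local_nonbipartite} to a statement about $P$-almost covers, using the interpretation of \cite{CHJV}. For a squarefree monomial ideal and an associated prime $P=(x_i,x_j)$ of $J(G)^{(t)}$, that result gives
$$v_P(J(G)^{(t)})=\min\{|\mathbf a| : \mathbf a \text{ is a } P\text{-almost } t\text{-cover of } G\}.$$
The point is that $J(G)^{(t)}:x^{\mathbf a}=P$ exactly when $x^{\mathbf a}\notin J^{(t)}$ while $x_ix^{\mathbf a},x_jx^{\mathbf a}\in J^{(t)}$. Write $\alpha_t$ for this minimum. The theorem then becomes the combinatorial identity $\alpha_{t+2}=\alpha_t+\gamma_{2,P}$ for $t\ge 2$.

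\textbf{The inequality $\alpha_{t+2}\le \alpha_t+\gamma_{2,P}$.} This is the easy direction, by adding vectors. If $\mathbf a$ is a $P$-almost $t$-cover and $\mathbf b$ is a $P$-tight $2$-cover, then $\mathbf a+\mathbf b$ satisfies $(a+b)_i+(a+b)_j=t-1+2$. Every other edge receives at least $t+2$. So $\mathbf a+\mathbf b$ is a $P$-almost $(t+2)$-cover, which gives the bound.

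\textbf{The inequality $\alpha_{t+2}\ge \alpha_t+\gamma_{2,P}$.} This is the main obstacle. We need a decomposition lemma: every $P$-almost $(t+2)$-cover $\mathbf c$ with $t\ge2$ can be written as $\mathbf c\ge \mathbf a+\mathbf b$ (componentwise), with $\mathbf a$ a $P$-almost $t$-cover and $\mathbf b$ a $P$-tight $2$-cover.

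To construct $\mathbf b$, we first try the standard "half-integral" peeling used for $t$-covers of graphs. Define $b_u\in\{0,1,2\}$ according to whether $c_u$ is small, medium, or large relative to $(t+2)/2$. For instance, set $b_u=2$ if $c_u\ge t+2$, $b_u=0$ if $c_u=0$, and $b_u=1$ otherwise. Then adjust near $i,j$ so that $b_i+b_j=2$ and $b_i\le c_i$, $b_j\le c_j$; this is possible since $c_i+c_j=t+1\ge 3$. Next check that $\mathbf a=\mathbf c-\mathbf b$ satisfies $a_u+a_v\ge t$ on every edge. If $c_u+c_v\ge t+2$ and the edge is not "overcovered", we need $b_u+b_v\le 2$, or else slack to absorb the excess. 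This fails for naive choices on edges with $b_u=b_v=1$ and $c_u+c_v=t+2$.

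Since $\mathbf b$ must itself be a $2$-cover, $\{u:b_u=0\}$ must be independent and its neighbors must get $2$. The cleaner route is therefore to split by thresholds: put $b_u=2$ if $c_u>\lceil (t+1)/2\rceil$-ish, and $b_u=0$ if $c_u<\lfloor (t+1)/2\rfloor$-ish. The remaining vertices get $1$.

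Concretely, I would set $m=c_i$ and use the edge $P$ as anchor. Choose a threshold $s$ with $1\le s\le t$ and define $b_u=2$ if $c_u>s$, $b_u=1$ if $c_u=s$ (or $c_u=t+1-s$), and $b_u=0$ otherwise. Then verify edge by edge, using $c_u+c_v\ge t+2$. The hypothesis $t\ge2$ is exactly what guarantees that $c_u\ge 2$ whenever $b_u=2$, and that edges where both $b$'s equal $1$ have enough slack.

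Finally, odd cycles are why the step is $2$ rather than $1$. A $P$-tight $1$-cover need not be peelable, because $\{0,1\}$-covers require a bipartition locally. With $2$-covers, the all-ones vector provides flexibility on odd cycles. The case analysis on the thresholds, and choosing $s$ correctly relative to $c_i$ and $c_j$, is where I expect the real work to be.
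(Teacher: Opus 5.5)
\paragraph{Verdict.} Your reduction to $P$-almost covers and the easy inequality $v_P(J(G)^{(t+2)})\le v_P(J(G)^{(t)})+\gamma_{2,P}$ are fine. The reverse inequality, however, rests entirely on the decomposition lemma, and you do not prove it. The rules you propose are either left vague (``adjust near $i,j$'') or fail.

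\paragraph{Why your rules fail.} The single-threshold rule ($b_u=2$ if $c_u>s$, $b_u=1$ if $c_u=s$, $b_u=0$ otherwise) already breaks for $G=K_2$, $t=2$, $(c_i,c_j)=(1,2)$. Taking $s=1$ gives $\mathbf b=(1,2)$, which is not $P$-tight. Taking $s=2$ gives $(0,1)$, which is not a $2$-cover. Meanwhile $(1,1)$ works. The underlying constraint is that on every tight edge ($c_u+c_v=t+2$, $\{u,v\}\ne P$) you need $b_u+b_v=2$ exactly, while on $P$ you need $b_i+b_j=2$ with $c_i+c_j=t+1$. A single value receiving $b=1$ cannot meet both requirements.

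Local adjustment near $P$ also cannot work when $c_i=0$. Take the path $i,j,v,w,z,\dots$ with $t=2$ and $\mathbf c=(0,3,1,3,1,\dots)$. Here $b_i=0$ and $b_j=2$ are forced. Then $a_j\le 1$ forces $b_v=0$, hence $b_w=2$, hence $b_z=0$, and so on along the whole chain of tight edges. The required change therefore propagates arbitrarily far from $P$.

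\paragraph{What is missing.} Your first rule ($b_u=0$ iff $c_u=0$, $b_u=2$ iff $c_u\ge t+2$, $b_u=1$ otherwise) was discarded for a wrong reason. An edge with $b_u=b_v=1$ and $c_u+c_v=t+2$ gives $a_u+a_v=t$, which is allowed. In fact that rule works whenever $c_i,c_j>0$. What is missing is the case $c_i=0$, $c_j=t+1$.

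The paper handles that case by building alternating sets $Y\ni i$ (with $c\le 1$) and $X\ni j$ (with $c\ge t+1$). $X$ grows by taking $G$-neighbours of $Y$, and $Y$ grows by taking tight-edge neighbours of $X$. It then sets $b=2$ on $X$, $b=0$ on $Y$, and $b=1$ elsewhere.

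A shorter fix along your own lines is as follows. Choose $k\in\{0,1\}\setminus\{c_i,c_j\}$, which is possible since $c_i+c_j=t+1\ge 3$. Set $b_u=0$ if $c_u\le k$, $b_u=2$ if $c_u\ge t+2-k$, and $b_u=1$ otherwise. Then:
\begin{itemize}
\item $\mathbf b\le\mathbf c$, and $P$ receives $(1,1)$ or $(0,2)$.
\item A vertex with $b_u=0$ has all non-$P$ neighbours with $b=2$.
\item Edges with $b$-values $(2,1)$ have $c$-sum at least $t+3$.
\item Edges with $b$-values $(2,2)$ have $c$-sum at least $2t+4-2k\ge t+4$, because $t\ge 2$.
\end{itemize}
On the path example above, this rule with $k=1$ produces $(0,2,0,2,\dots)$ automatically. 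Without some such argument, the lower bound, and hence the theorem, is not established.
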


In the next section, we review the necessary notation and prove Theorems~\ref{thm_local_bipartite} and \ref{thm_local_nonbipartite}. We then analyze the global $v$-number and provide explicit formulas for the initial values and slopes of these (quasi-)linear functions in terms of the combinatorial data of $G$.

\section{Local $v$-number of symbolic powers of cover ideals}

Throughout the paper, let $S=k[x_1,\ldots,x_n]$ be a standard graded polynomial ring over a field $k$. For a nonzero homogeneous ideal $I$ of $S$, we denote by $\Ass(I)$ the set of associated primes of $I$.

\subsection{Local $v$-numbers via integer programming}

For a monomial $f$, we denote by $\deg(f)$ its total degree and, for each $i\in[n]$, by $\deg_i(f)$ the exponent of $x_i$ in $f$. For a monomial prime ideal $P$, we further define
\[
\deg_P(f)=\sum_{x_i\in P}\deg_i(f).
\]

The following result, which follows from \cite{CHJV}, allows us to compute the local $v$-numbers of symbolic powers of squarefree monomial ideals via integer programming. We include a proof for completeness.

\begin{lem}\label{lem_criterion}
Let $I\subseteq S$ be a nonzero squarefree monomial ideal, let $P$ be an associated prime of $I$, and let $t\ge1$ be an integer. Let $u$ be a nonzero monomial in $S$. Then
\[
I^{(t)}:u=P
\]
if and only if
\[
\deg_P(u)=t-1
\quad\text{and}\quad
\deg_Q(u)\ge t
\]
for every associated prime $Q\neq P$ of $I$.
\end{lem}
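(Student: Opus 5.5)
Since $I$ is squarefree, its associated primes are exactly its minimal primes $Q_1,\dots,Q_m$, all generated by variables. I will use the standard description of symbolic powers of squarefree monomial ideals,
\[
I^{(t)}=\bigcap_{Q\in\Ass(I)} Q^{t},
\]
together with the fact that for a monomial prime $Q$, a monomial $w$ lies in $Q^t$ if and only if $\deg_Q(w)\ge t$. Both colon and intersection commute for monomial ideals, so
\[
I^{(t)}:u=\bigcap_{Q\in\Ass(I)} (Q^t:u),\qquad Q^t:u = Q^{\,t-\deg_Q(u)},
\]
where $Q^{s}=S$ for $s\le 0$. Checking the second identity is routine: a monomial $w$ satisfies $wu\in Q^t$ iff $\deg_Q(w)+\deg_Q(u)\ge t$, because $\deg_Q$ is additive on monomials.

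For the ``if'' direction, substitute the hypotheses. Each $Q\neq P$ has $\deg_Q(u)\ge t$, so its factor is $S$. For $P$ we have $\deg_P(u)=t-1$, so its factor is $P^1=P$. Hence $I^{(t)}:u=P$.

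For the ``only if'' direction, suppose $\bigcap_Q Q^{\,t-\deg_Q(u)}=P$, and write $s_Q=\max(t-\deg_Q(u),0)$.

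\emph{Step 1.} Each factor $Q^{s_Q}$ contains $P$, so $P\subseteq Q^{s_Q}$ for all $Q$. Since $P$ is generated in degree $1$, either $s_Q=0$, or $s_Q=1$ and $P\subseteq Q$. As the associated primes are minimal primes of $I$, they are pairwise incomparable, so $P\subseteq Q$ forces $Q=P$. Therefore $s_Q=0$ for $Q\neq P$, which gives $\deg_Q(u)\ge t$, and $s_P\le 1$.

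\emph{Step 2.} The intersection now equals $P^{s_P}$. Since this must equal $P\ne S$, we get $s_P=1$, that is, $\deg_P(u)=t-1$.

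The main point needing care is the structural input: the equality $I^{(t)}=\bigcap Q^t$ (valid since $I=\bigcap Q$ is an intersection of monomial primes with no embedded components) and the incomparability of the minimal primes used in Step 1. The remaining steps are bookkeeping with monomial degrees.
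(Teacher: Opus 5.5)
Your proof is correct, and it reaches the result by a somewhat different route from the paper. You compute the colon explicitly once and for all,
\[
I^{(t)}:u=\bigcap_{Q\in\Ass(I)} Q^{\max(t-\deg_Q(u),\,0)}.
\]
The ``if'' direction is then a substitution. The ``only if'' direction reduces to the observation that $P\subseteq Q^{s}$ forces $s\le 1$, and then $P\subseteq Q$, hence $Q=P$ by incomparability. The paper never writes down this formula. For the ``only if'' direction it localizes $I^{(t)}:u=P$ at $P$, where $I^{(t)}S_P=P^tS_P$ and the variables outside $P$ become units, and reads off $\deg_P(u)=t-1$. Then, for each $Q\neq P$, it picks a variable $x\in P\setminus Q$ and uses that $xu\in Q^{t}$, with $Q^{t}$ being $Q$-primary, to get $u\in Q^{t}$. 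The ``if'' direction is checked element by element in the paper: $u\notin I^{(t)}$, $xu\in I^{(t)}$ for $x\in P$, and $uf\notin I^{(t)}$ for monomials $f\notin P$. This is the same content as your substitution. Your version is more uniform: it describes $I^{(t)}:u$ for every monomial $u$ at once, which makes the later translation into almost covers (Lemma~\ref{lem_int_prog}) completely transparent. The paper's forward argument avoids writing down the general colon and works directly with the defining property $I^{(t)}:u=P$, using localization and the primary property of the components.
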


\begin{proof}
First, assume that
\[
I^{(t)} : u = P.
\]
Localizing this equality at $P$ yields
\[
P^t S_P : u = P S_P.
\]
Because the variables outside $P$ become units in $S_P$, it follows that
\[
\deg_P(u) = t - 1.
\]

Now, let $Q \neq P$ be another associated prime of $I$. Since $P$ and $Q$ are incomparable, there exists a variable $x \in P \setminus Q$. Since $xu \in I^{(t)} \subseteq Q^{(t)}$ and $x \notin Q$, it follows that $u \in Q^{(t)}$. In particular,
\[
\deg_Q(u) \ge t.
\]

Conversely, assume that $u$ is a monomial such that
\[
\deg_P(u) = t - 1 \quad \text{and} \quad \deg_Q(u) \ge t
\]
for every associated prime $Q \neq P$ of $I$. Then $u \notin P^t$, and hence $u \notin I^{(t)}$. 

Now, let $x \in P$ be a variable. Since $\deg_P(xu) \ge t$, we have $xu \in P^t S_P \cap S = P^t$. Moreover, since $u \in Q^t$ for every associated prime $Q \neq P$ of $I$, we also have $xu \in Q^t$ for every such $Q$. Therefore, $xu \in I^{(t)}$. Finally, let $f$ be a monomial with $f \notin P$. Then $\deg_P(uf) = t - 1$, and hence $uf \notin P^t$, which implies $uf \notin I^{(t)}$. Thus, $I^{(t)} : u = P$.
\end{proof}

\subsection{Graphs and their cover ideals}  Let $G$ be a simple graph with vertex set $V(G) = \{1, \ldots, n\}$ and edge set $E(G)$. Throughout, we assume that $G$ has no isolated vertices.

\begin{defn}\label{definition1}
Let \( G \) be a simple graph with vertex set \( V(G) = \{1, \ldots, n\} \) and edge set \( E(G) \).

\begin{enumerate}
    \item A simple graph \( H \) is a subgraph of \( G \) if \( V(H) \subseteq V(G) \) and \( E(H) \subseteq E(G) \). It is an \emph{induced subgraph} of \( G \) if \( E(H) = E(G) \cap \big( V(H) \times V(H) \big) \).
    
    \item For a subset \( U \subseteq V(G) \), we denote by \( G[U] \) and \( G - U \) the induced subgraphs of \( G \) on \( U \) and on \( V(G) \setminus U \), respectively.
    
    \item For a vertex $v \in V(G)$, we denote by $N_G(v)$ the set of neighbors of $v$. For a subset $U \subseteq V(G)$, we denote by $N_G(U) = \bigcup_{u \in U} N_G(u)$ the set of all neighbors of vertices in $U$.
    
    \item A path \( P_n \) on \( n \) vertices is the graph with vertex set \( V(P_n) = \{1, \ldots, n\} \) and edge set
    \[
    E(P_n) = \{ \{1,2\}, \ldots, \{n-1,n\} \}.
    \]
    
    \item A cycle \( C_n \) on \( n \) vertices is the graph with vertex set \( V(C_n) = \{1, \ldots, n\} \) and edge set
    \[
    E(C_n) = E(P_n) \cup \{ \{1,n\} \}.
    \]
    
    \item A \emph{forest} is a graph with no cycles. A \emph{tree} is a connected forest.

   \item    A subset $W \subseteq V(G)$ is called a \emph{vertex cover} of $G$ if $W \cap e \neq \emptyset$ for every edge $e \in E(G)$. It is called a \emph{minimal vertex cover} if no proper subset of $W$ is a vertex cover of $G$. We denote by $\tau(G)$ the minimum size of a vertex cover of $G$.
   
   \item The \emph{cover ideal} of $G$, denoted by $J(G)$, is defined by
$$J(G) = \bigcap_{\{i,j\} \in E(G)} (x_i,x_j).$$

\end{enumerate}
\end{defn}

It is well known that
$$J(G) = (x_W \mid W \text{ is a minimal vertex cover of } G),$$
where $x_W = \prod_{i \in W} x_i$. The $t$-th symbolic power of $J(G)$ is given by
$$J(G)^{(t)}=\bigcap_{ \{i,j\} \in E(G)}(x_i, x_j)^t.$$

By Lemma~\ref{lem_criterion}, we obtain the following interpretation of the
local $v$-number of $J(G)^{(t)}$.

\begin{lem}\label{lem_int_prog}
Let $G$ be a simple graph, and let $J(G)$ denote its cover ideal. Let
$P=\{i,j\}$ be an edge of $G$, and also write $P=(x_i,x_j)$ for the
corresponding prime ideal of $S$. Then, for every integer $t\ge1$,
\[
v_P\bigl(J(G)^{(t)}\bigr)
=
\min\left\{
|\mathbf{a}|
\;\middle|\;
\mathbf{a}\text{ is a $P$-almost $t$-cover of $G$}
\right\}.
\]
\end{lem}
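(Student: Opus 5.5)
The plan is to apply Lemma~\ref{lem_criterion} to $I = J(G)$ and then translate the monomial conditions into conditions on exponent vectors.

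\textbf{Setup.} First, $J(G)$ is a squarefree monomial ideal and $J(G)=\bigcap_{\{u,v\}\in E(G)}(x_u,x_v)$ is an irredundant primary decomposition. Indeed, no two of these primes are comparable, since all have height two and distinct edges give distinct ideals. Hence
\[
\Ass(J(G))=\{(x_u,x_v)\mid \{u,v\}\in E(G)\},
\]
and $P=(x_i,x_j)$ is one of them. Identify a monomial $u=x^{\mathbf{a}}$ with its exponent vector $\mathbf{a}\in\mathbb{N}^n$. Then $\deg(u)=|\mathbf{a}|$ and $\deg_Q(u)=a_u+a_v$ for $Q=(x_u,x_v)$.

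\textbf{Translation.} By Lemma~\ref{lem_criterion}, $J(G)^{(t)}:x^{\mathbf{a}}=P$ holds exactly when:
\begin{itemize}
\item $a_i+a_j=t-1$, and
\item $a_u+a_v\ge t$ for every other edge $\{u,v\}$.
\end{itemize}
That is precisely the condition that $\mathbf{a}$ is a $P$-almost $t$-cover. Thus the minimum of $|\mathbf{a}|$ over $P$-almost $t$-covers equals the minimum degree of a \emph{monomial} $f$ with $J(G)^{(t)}:f=P$. This set is nonempty: take $a_i=t-1$, $a_j=0$, and all other coordinates equal to $t$.

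\textbf{Reduction to monomials.} The remaining point, and the only real obstacle, is that $v_P$ is defined by a minimum over all homogeneous $f\in S_d$, not just monomials. So I must show that if some homogeneous $f$ of degree $d$ satisfies $I:f=P$ (with $I=J(G)^{(t)}$), then some monomial of degree $d$ does too. This is the standard reduction for monomial ideals.

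I would first replace $f$ by its remainder modulo $I$. Subtracting the terms lying in the monomial ideal $I$ does not change $I:f$, since $I:f$ depends only on $f$ modulo $I$. So assume every term of $f$ lies outside $I$. Because $I:f=P$, for each $x\in P$ we have $xf\in I$. Since $I$ is a monomial ideal, every term of $xf$ lies in $I$, so $x\,m\in I$ for every term $m$ of $f$. Hence $P\subseteq I:m$ for each term $m$.

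Since $m\notin I$, the ideal $I:m$ is a proper monomial ideal containing $P$. Moreover, $I:m$ is an intersection of the ideals $Q^t:m$ over $Q\in\Ass(I)$, each of which is either $S$ or $Q$-primary. So its associated primes lie among $\Ass(I)$. Any associated prime of $I:m$ must contain $P$, and by incomparability it must equal $P$. Therefore $I:m$ is $P$-primary and contains $P$.

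It remains to rule out $I:m\supsetneq P$. If that happened, $I:m$ would contain a monomial $g\notin P$. Then $gm\in I\subseteq P^t$, and hence $m\in P^t$, because $P^t$ is $P$-primary and $g\notin P$. That forces $m\in P^t$ for every term $m$, so $f\in P^t$. Localizing the equality $I:f=P$ at $P$, exactly as in the proof of Lemma~\ref{lem_criterion}, gives $P^tS_P:f=PS_P$. This is a contradiction, since $f\in P^t$ would make the left side the unit ideal. Hence $I:m=P$ for each term $m$ of $f$, and in particular for a monomial of degree $d$. This gives equality of the two minima.
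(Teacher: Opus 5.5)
Your translation step is exactly the paper's argument: apply Lemma~\ref{lem_criterion} with $\Ass(J(G))$ equal to the set of edge primes, and read off the $P$-almost $t$-cover conditions. The paper takes for granted that the minimum defining $v_P$ may be restricted to monomials. Your explicit reduction from homogeneous $f$ to monomials fills that gap and is worth including, but its last paragraph needs a fix.

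There, assuming $I:m\supsetneq P$ for a single term $m$ only gives $m\in P^t$ for that one term. It does not give it for every term, so you get no contradiction with $f\notin P^t$.

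The paragraph is in fact unnecessary. You had already shown that $I:m$ is $P$-primary, so $I:m\subseteq\sqrt{I:m}=P$. Together with $P\subseteq I:m$, this gives $I:m=P$ for every term $m$.

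Alternatively, keep your localization argument but use it only to conclude $f\notin P^t$. Since $P^t$ is monomial, some term $m$ of $f$ satisfies $m\notin P^t$. Then $P\subseteq I:m\subseteq P^t:m\subseteq P$, because $P^t:m$ is $P$-primary. One such monomial of degree $d$ is all you need.
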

\begin{proof}
Let $f = x_1^{a_1}\cdots x_n^{a_n}$ be a monomial such that $J(G)^{(t)} : f = P$. By Lemma~\ref{lem_criterion}, the exponent vector $\mathbf{a} = (a_1, \ldots, a_n)$ is a $P$-almost $t$-cover of $G$. The conclusion then follows directly from the definition of the local $v$-number $v_P(J(G)^{(t)})$.
\end{proof}

\subsection{Local $v$-numbers of bipartite graphs}

\begin{lem}\label{lem_decomp1} 
Let $G$ be a bipartite graph, and let $P = \{i,j\}$ be an edge of $G$. Assume that $t \ge 1$ is an integer. Let $\mathbf{c} \in \mathbb{N}^n$ be a $P$-almost $(t+1)$-cover of $G$. Then there exists a $1$-cover $\mathbf{b}$ and a $P$-almost $t$-cover $\mathbf{a}$ of $G$ such that $\mathbf{c} = \mathbf{a} + \mathbf{b}$.
\end{lem}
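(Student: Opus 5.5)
The plan is to take $\mathbf{b}$ to be the indicator vector of a suitable vertex cover $W$ of $G$, defined by degree thresholds on the two sides of the bipartition, and then set $\mathbf{a}=\mathbf{c}-\mathbf{b}$. Write $V(G)=X\sqcup Y$ for a bipartition with $i\in X$ and $j\in Y$. Since $c_i+c_j=t\ge 1$, at least one of $c_i,c_j$ is positive. By symmetry (swap the roles of $X$ and $Y$) we may assume $c_i\ge 1$. Put $k=c_i$, so that $1\le k\le t$, and define
\[
W=\{x\in X \mid c_x\ge k\}\ \cup\ \{y\in Y\mid c_y\ge t+2-k\}.
\]
Let $\mathbf{b}$ be the $0/1$ indicator vector of $W$. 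Both thresholds are at least $1$, so $W$ lies in the support of $\mathbf{c}$. Hence $\mathbf{a}=\mathbf{c}-\mathbf{b}\in\mathbb{N}^n$.

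First I check that $W$ is a vertex cover, so that $\mathbf{b}$ is a $1$-cover. Take an edge $\{x,y\}\ne P$ with $x\in X$, $y\in Y$; then $c_x+c_y\ge t+1$. If $c_x<k$, then $c_y\ge t+2-k$, so $y\in W$. For the edge $P$ itself, $i\in W$ because $c_i=k$.

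Next I verify that $\mathbf{a}$ is a $P$-almost $t$-cover. On $P$ we have $c_j=t-k<t+2-k$, so $j\notin W$. Hence $b_i+b_j=1$ and $a_i+a_j=t-1$. For another edge $\{x,y\}$ there are two cases.
\begin{itemize}
\item If $c_x+c_y\ge t+2$, then $a_x+a_y\ge t+2-2=t$.
\item If $c_x+c_y=t+1$, both endpoints cannot lie in $W$, since that would force $c_x+c_y\ge k+(t+2-k)=t+2$. So $a_x+a_y=t+1-1=t$.
\end{itemize}
This gives $\mathbf{c}=\mathbf{a}+\mathbf{b}$ with the required properties.

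The only delicate point is choosing the thresholds $k$ and $t+2-k$ so that three things hold at once: $W$ covers every edge, $W$ meets each tight edge (sum exactly $t+1$) in exactly one vertex, and $W$ meets $P$ in exactly one vertex while staying inside the support of $\mathbf{c}$. Bipartiteness is used exactly once: it lets us assign different thresholds to the two sides so that they add up to $t+2$. The choice $k=c_i$ is what pins down the behaviour on $P$.
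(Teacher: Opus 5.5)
Your proof is correct and is essentially the paper's argument. The paper also takes $\mathbf{b}$ to be the indicator vector of $\{u\in U : c_u\ge r\}\cup\{v\in V : c_v\ge t+2-r\}$ for any $r\in\{1,\ldots,t+1\}\setminus\{c_i+1\}$, and your choice $r=c_i$ (after arranging $c_i\ge 1$ by symmetry) is one admissible value, with only a cosmetic difference in how the remaining edges are split into cases ($c_x+c_y\ge t+2$ versus $=t+1$, rather than $b_u+b_v\in\{1,2\}$).
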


\begin{proof}
Let $V(G) = U \cup V$ be a bipartition of $G$, and assume without loss of generality that $i \in U$ and $j \in V$. Choose $r \in \{1, \ldots, t+1\} \setminus \{c_i + 1\}$. Define $\mathbf{b}$ on $U$ and $V$ as follows:
\[
b_u = \begin{cases} 
1 & \text{if } u \in U \text{ and } c_u \ge r, \\
0 & \text{otherwise,} 
\end{cases}
\]
and
\[
b_v = \begin{cases} 
1 & \text{if } v \in V \text{ and } c_v \ge t + 2 - r, \\
0 & \text{otherwise.} 
\end{cases}
\]
We now prove that $\mathbf{b}$ is a $1$-cover of $G$.

Indeed, suppose for the sake of contradiction that there exists an edge $\{u,v\} \in E(G)$ with $u \in U$ and $v \in V$ such that $b_u = 0$ and $b_v = 0$. By definition, we have $c_u \le r - 1$ and $c_v \le t + 1 - r$. Consequently, $c_u + c_v \le t$. Since $\mathbf{c}$ is a $P$-almost $(t+1)$-cover, $c_u + c_v \ge t+1$ for all edges $\{u,v\} \neq \{i,j\}$. Thus, $c_u + c_v \le t$ is possible if and only if $u = i$ and $v = j$. But then $c_i \le r - 1$ and $c_j \le t + 1 - r$. Combined with $c_i + c_j = t$, this forces $c_i = r - 1$, which contradicts our choice of $r \neq c_i + 1$. Hence, no such edge exists, and $\mathbf{b}$ is a $1$-cover of $G$.

Now, let $\mathbf{a} = \mathbf{c} - \mathbf{b}$. We show that $\mathbf{a}$ is a $P$-almost $t$-cover of $G$. From the construction, exactly one of $b_i, b_j$ is equal to $1$, so $b_i + b_j = 1$. Thus, $a_i + a_j = (c_i + c_j) - (b_i + b_j) = t - 1$. 

For any other edge $\{u,v\} \neq \{i,j\}$ in $G$:
\begin{itemize}
    \item If $b_u + b_v = 2$, then $c_u + c_v \ge (r) + (t + 2 - r) = t + 2$, so $a_u + a_v = (c_u + c_v) - 2 \ge t$.
    \item If $b_u + b_v = 1$, then since $c_u + c_v \ge t + 1$, we have $a_u + a_v = (c_u + c_v) - 1 \ge t$.
\end{itemize}
Thus, $\mathbf{a}$ is a $P$-almost $t$-cover of $G$, completing the proof.
\end{proof}

\begin{proof}[Proof of Theorem~\ref{thm_local_bipartite}]
Let $\mathbf{b}$ be a $P$-tight $1$-cover of $G$ such that $|\mathbf{b}| = \gamma_{1,P}$. By Lemma~\ref{lem_int_prog}, there exists a $P$-almost $t$-cover $\mathbf{a}$ of $G$ such that $v_P(J(G)^{(t)}) = |\mathbf{a}|$. The sum $\mathbf{a} + \mathbf{b}$ is then a $P$-almost $(t+1)$-cover of $G$. Thus, by Lemma~\ref{lem_int_prog}, 
$$v_P(J(G)^{(t+1)}) \le |\mathbf{a}| + |\mathbf{b}| = v_P(J(G)^{(t)}) + \gamma_{1,P}.$$

Conversely, let $\mathbf{c}$ be a $P$-almost $(t+1)$-cover of $G$ such that $|\mathbf{c}| = v_P(J(G)^{(t+1)})$. By Lemma~\ref{lem_decomp1}, we can write $\mathbf{c} = \mathbf{a} + \mathbf{b}$, where $\mathbf{b}$ is a $P$-tight $1$-cover of $G$ and $\mathbf{a}$ is a $P$-almost $t$-cover of $G$. By Lemma~\ref{lem_int_prog}, 
$$v_P(J(G)^{(t+1)}) = |\mathbf{a}| + |\mathbf{b}| \ge v_P(J(G)^{(t)}) + \gamma_{1,P}.$$
Since the two inequalities yield equality for every $t \ge 1$, the conclusion follows by induction.
\end{proof}

Hence, the local $v$-number of $J(G)^{(t)}$ at $P$ is completely determined by $v_P(J(G))$ and $\gamma_{1,P}$. We now give formulas for these values in terms of graph invariants of $G$.

\begin{lem}\label{lem_v_1}
Let $G$ be a simple graph and let $P = \{i,j\}$ be an edge of $G$. We also denote by $P = (x_i, x_j)$ the corresponding prime ideal in $S$. Then,
\[
v_P(J(G)) = \deg_G(i) + \deg_G(j) - 2 - |N_G(i) \cap N_G(j)| + \tau(G - N_G[\{i,j\}]).
\]
\end{lem}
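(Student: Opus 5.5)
By Lemma~\ref{lem_int_prog} with $t=1$, $v_P(J(G))$ is the minimum of $|\mathbf{a}|$ over $P$-almost $1$-covers $\mathbf{a}$. Such an $\mathbf{a}$ satisfies $a_i=a_j=0$ and $a_u+a_v\ge 1$ for every other edge. Replacing each positive entry by $1$ keeps these conditions and does not increase $|\mathbf{a}|$. So we may assume $\mathbf{a}$ is the indicator vector of a set $W\subseteq V(G)\setminus\{i,j\}$ that meets every edge other than $\{i,j\}$. The problem therefore becomes computing
\[
\min\{|W| \mid W\cap\{i,j\}=\emptyset,\ W \text{ covers } E(G)\setminus\{\{i,j\}\}\}.
\]

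For such a $W$, every edge $\{i,u\}$ with $u\neq j$ forces $u\in W$, and likewise every edge $\{j,u\}$ with $u\ne i$. Hence
\[
W\supseteq (N_G(i)\cup N_G(j))\setminus\{i,j\}.
\]
This set has size $(\deg_G(i)-1)+(\deg_G(j)-1)-|N_G(i)\cap N_G(j)|$ by inclusion–exclusion. The point to check is that $j\in N_G(i)$ and $i\in N_G(j)$ are each removed exactly once, and that $i,j\notin N_G(i)\cap N_G(j)$ since $G$ is simple.

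The remaining edges not yet covered are exactly the edges with both endpoints outside $N_G[\{i,j\}]=N_G(i)\cup N_G(j)\cup\{i,j\}$. These are precisely the edges of the induced subgraph $G-N_G[\{i,j\}]$. Any edge with an endpoint in $(N_G(i)\cup N_G(j))\setminus\{i,j\}$ is already covered, and no edge other than $\{i,j\}$ has an endpoint in $\{i,j\}$ while avoiding that set. Therefore $W\cap V(G-N_G[\{i,j\}])$ must be a vertex cover of that graph, so it has at least $\tau(G-N_G[\{i,j\}])$ elements. This gives the lower bound.

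For the upper bound, I take $W=(N_G(i)\cup N_G(j))\setminus\{i,j\}$ together with a minimum vertex cover of $G-N_G[\{i,j\}]$, and verify directly that it covers every edge except $\{i,j\}$. There is no real obstacle here. The only care needed is the bookkeeping that justifies the count $\deg_G(i)+\deg_G(j)-2-|N_G(i)\cap N_G(j)|$, and the reduction to $0/1$ vectors.
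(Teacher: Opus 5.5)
Your proof is correct and follows the same route as the paper. Both reduce via Lemma~\ref{lem_int_prog} to $P$-almost $1$-covers, observe that $a_i=a_j=0$ forces every other neighbor of $i$ and $j$ into the support, and note that the leftover edges are exactly those of $G-N_G[\{i,j\}]$; you simply make explicit the reduction to $0/1$ vectors and the inclusion--exclusion count that the paper leaves implicit.
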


\begin{proof}
Let $\mathbf{a}$ be a $P$-almost $1$-cover of $G$. By definition, we must have $a_i = a_j = 0$. Consequently, $a_u \ge 1$ for all $u \in N_G[\{i,j\}] \setminus \{i,j\}$. The only uncovered edges remain in $G - N_G[\{i,j\}]$. The conclusion then follows directly from Lemma~\ref{lem_criterion}.
\end{proof}

\begin{lem}\label{lem_gamma_1}
Let $G$ be a simple graph and let $P = \{i,j\}$ be an edge of $G$. We also denote by $P = (x_i, x_j)$ the corresponding prime ideal in $S$. Then,
\[
\gamma_{1,P}(G) = \min \left\{ \deg_G(i) + \tau(G - N_G[i]), \, \deg_G(j) + \tau(G - N_G[j]) \right\}.
\]
\end{lem}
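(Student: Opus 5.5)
A $P$-tight $1$-cover $\mathbf{b}$ satisfies $b_i + b_j = 1$ with $b_i,b_j \in \mathbb{N}$, so exactly one of $b_i, b_j$ equals $1$ and the other equals $0$. The plan is to split the minimization defining $\gamma_{1,P}$ into these two cases and show the case $b_j=0$, $b_i=1$ contributes exactly $\deg_G(j)+\tau(G-N_G[j])$. By symmetry, the case $b_i=0$, $b_j=1$ contributes $\deg_G(i)+\tau(G-N_G[i])$. Taking the minimum gives the formula.

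Fix the case $b_j=0$. Every edge $\{j,u\}$ must be covered, so $b_u\ge 1$ for all $u\in N_G(j)$; this includes $u=i$, forcing $b_i=1$ as required. Replacing every entry $b_u>1$ by $1$ keeps $\mathbf{b}$ a $P$-tight $1$-cover and does not increase $|\mathbf{b}|$. So we may assume $\mathbf{b}$ is the indicator vector of a vertex cover $W$ of $G$ with $j\notin W$, $N_G(j)\subseteq W$, and $i\in W$. Conversely, any such $W$ gives a $P$-tight $1$-cover, since $b_i+b_j=1+0=1$.

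Write $W = N_G(j)\cup W'$ with $W'$ disjoint from $N_G[j]$. Every edge incident to $N_G(j)$ is already covered, and every edge at $j$ is covered. The remaining edges are exactly the edges of $G-N_G[j]$, so $W$ is a cover if and only if $W'$ is a vertex cover of $G-N_G[j]$. Hence the minimum of $|W|$ over such $W$ is $|N_G(j)|+\tau(G-N_G[j]) = \deg_G(j)+\tau(G-N_G[j])$.

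There is no real obstacle. The only points to check are that truncating entries to $0/1$ preserves tightness, which holds because $b_i=1$ and $b_j=0$ are unchanged, and that the decomposition of edges into those meeting $N_G(j)$ and those of $G-N_G[j]$ is exhaustive, which is immediate since every edge either meets $N_G[j]$ (and hence is covered by $N_G(j)$) or lies in $G-N_G[j]$.
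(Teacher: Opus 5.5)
Your proposal is correct and follows the paper's proof: you split on which of $b_i,b_j$ is zero, force $b_u\ge 1$ on the neighbors of the zero vertex, and reduce to a minimum vertex cover of $G-N_G[j]$ (or $G-N_G[i]$). You also spell out the steps the paper leaves implicit, namely truncating the entries of $\mathbf{b}$ to $0/1$ and showing that every such vertex cover gives a $P$-tight $1$-cover.
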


\begin{proof}
Let $\mathbf{a}$ be a $P$-tight $1$-cover of $G$. By definition, we must have either $a_i = 1$ and $a_j = 0$, or $a_i = 0$ and $a_j = 1$. 

If $a_i = 1$ and $a_j = 0$, then $a_u \ge 1$ for all $u \in N_G(j)$. The only uncovered edges remaining are those in $G - N_G[j]$. Thus, the minimal weight of such a vector is $\deg_G(j) + \tau(G - N_G[j])$. By symmetry, if $a_i = 0$ and $a_j = 1$, the minimal weight is $\deg_G(i) + \tau(G - N_G[i])$. Taking the minimum of these two cases yields the desired conclusion.
\end{proof}

\begin{cor}
Let $P_n$ be a path graph on $n$ vertices. Then for all $t \ge 1$,
\[
v(J(P_n)^{(t)}) = \left\lfloor \frac{n-1}{2} \right\rfloor + (t-1) \left\lfloor \frac{n}{2} \right\rfloor.
\]
\end{cor}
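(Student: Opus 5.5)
By definition, $v(J(P_n)^{(t)})=\min_{P}v_P(J(P_n)^{(t)})$, where $P$ ranges over the associated primes $(x_i,x_{i+1})$, $1\le i\le n-1$. Since $P_n$ is bipartite, Theorem~\ref{thm_local_bipartite} gives
\[
v_P(J(P_n)^{(t)})=v_P(J(P_n))+(t-1)\gamma_{1,P}.
\]
The plan is to bound each summand from below uniformly in $P$, and then exhibit one edge where both lower bounds are attained at once. This shows that the minimum over $P$ of the sum equals the sum of the lower bounds, for every $t$ simultaneously. We use throughout that $\tau(P_m)=\lfloor m/2\rfloor$, with $\tau$ of the empty graph equal to $0$.

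\emph{Lower bounds.} A $P$-tight $1$-cover is in particular a $1$-cover. Its support is therefore a vertex cover of $P_n$, so $\gamma_{1,P}\ge\tau(P_n)=\lfloor n/2\rfloor$.

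For $v_P(J(P_n))$, use Lemma~\ref{lem_int_prog}. The support of a $P$-almost $1$-cover $\mathbf a$ is a vertex cover of $P_n$ with the edge $P=\{i,i+1\}$ deleted. That graph is the disjoint union of paths on $i$ and $n-i$ vertices. Hence
\[
|\mathbf a|\ge\lfloor i/2\rfloor+\lfloor (n-i)/2\rfloor\ge\lfloor (n-1)/2\rfloor .
\]
The last inequality is a parity check: the sum equals $\lfloor n/2\rfloor$ unless $i$ and $n-i$ are both odd, in which case $n$ is even and the sum is $(n-2)/2=\lfloor(n-1)/2\rfloor$.

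\emph{Attainment.} Take $P=\{1,2\}$. For $n\ge3$, Lemma~\ref{lem_v_1} gives
\[
v_P(J(P_n))=(1+2-2)-0+\tau(P_{n-3})=1+\lfloor (n-3)/2\rfloor=\lfloor(n-1)/2\rfloor .
\]
For $n=2$ the formula directly gives $0=\lfloor (n-1)/2\rfloor$. For $\gamma_{1,P}$, the vector supported on the even vertices $\{2,4,\dots\}$ is a $1$-cover of $P_n$ with $a_1+a_2=1$. So it is a $P$-tight $1$-cover of weight $\lfloor n/2\rfloor$, and together with the lower bound, $\gamma_{1,P}=\lfloor n/2\rfloor$. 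Alternatively, this follows from Lemma~\ref{lem_gamma_1}.

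Combining the two parts, every $P$ satisfies $v_P(J(P_n)^{(t)})\ge\lfloor(n-1)/2\rfloor+(t-1)\lfloor n/2\rfloor$, with equality at $P=\{1,2\}$, which is the claimed formula. The only step needing care is the parity computation in the lower bound for $v_P(J(P_n))$ together with the small cases $n=2,3$, where some of the removed vertex sets are empty; there is no other real obstacle.
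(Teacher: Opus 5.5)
Your proof is correct. Its skeleton matches the paper's: Theorem~\ref{thm_local_bipartite} reduces everything to the intercepts $v_P(J(P_n))$ and the slopes $\gamma_{1,P}$, followed by a minimum over edges. The difference lies in how that minimum is handled.

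\emph{The paper's route.} It computes both quantities exactly for every edge from Lemmas~\ref{lem_v_1} and~\ref{lem_gamma_1}. It observes that all slopes equal $\lfloor n/2\rfloor$ and then simply says that taking the minimum gives the formula. The comparison of the middle-edge intercepts $2+\lfloor (i-2)/2\rfloor+\lfloor (n-i-2)/2\rfloor=\lfloor i/2\rfloor+\lfloor (n-i)/2\rfloor$ with $\lfloor (n-1)/2\rfloor$ is left to the reader.

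\emph{Your route.} You use uniform lower bounds obtained from supports, namely $\gamma_{1,P}\ge\tau(P_n)$ and $v_P(J(P_n))\ge\tau$ of $P_n$ with the edge $P$ deleted. You then attain both bounds at the single edge $\{1,2\}$.

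\emph{What each buys.} Your approach avoids Lemma~\ref{lem_gamma_1} and most of the floor arithmetic. It makes the minimization step explicit: your parity check is exactly what the paper omits. It also does not need the slopes to coincide. The paper's approach yields in addition the full list of local functions $v_P(J(P_n)^{(t)})$ for every edge. The equal-slope fact is also immediate in your setting, since the even-vertex vector is $P$-tight for every edge. For paths, your lower bound for the intercept in fact equals the exact value from Lemma~\ref{lem_v_1} at every edge, so your bounds lose nothing.
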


\begin{proof}
By Lemma~\ref{lem_v_1}, the local $v$-number at $P$ for $t = 1$ is given by
\[
v_P(J(P_n)) = \begin{cases}
1 + \left\lfloor \frac{n-3}{2} \right\rfloor & \text{if } P = (x_1, x_2) \text{ or } P = (x_{n-1}, x_n), \\[6pt]
2 + \left\lfloor \frac{i-2}{2} \right\rfloor + \left\lfloor \frac{n-i-2}{2} \right\rfloor & \text{if } P = (x_i, x_{i+1}) \text{ with } 2 \le i \le n-2.
\end{cases}
\]

By Lemma~\ref{lem_gamma_1}, if $P = (x_1, x_2)$ or $P = (x_{n-1}, x_n)$, we have
\[
\gamma_{1,P}(P_n) = \min \left\{ 1 + \left\lfloor \frac{n-2}{2} \right\rfloor, \, 2 + \left\lfloor \frac{n-3}{2} \right\rfloor \right\} = \left\lfloor \frac{n}{2} \right\rfloor.
\]
When $P = (x_i, x_{i+1})$ with $2 \le i \le n-2$, we have
\[
\gamma_{1,P}(P_n) = \min \left\{ 2 + \left\lfloor \frac{i-1}{2} \right\rfloor + \left\lfloor \frac{n-i-2}{2} \right\rfloor, \, 2 + \left\lfloor \frac{i-2}{2} \right\rfloor + \left\lfloor \frac{n-i-1}{2} \right\rfloor \right\} = \left\lfloor \frac{n}{2} \right\rfloor.
\]

Thus, the local $v$-numbers $v_P(J(P_n)^{(t)}) = v_P(J(P_n)) + (t-1)\gamma_{1,P}(P_n)$ are linear functions of $t$ with the same leading coefficient $\left\lfloor \frac{n}{2} \right\rfloor$. Taking the minimum over all associated primes $P \in \operatorname{Ass}(S/J(P_n))$ yields the desired global $v$-number formula.
\end{proof}

\begin{cor}
Let $C_{2m}$ be an even cycle on $2m$ vertices, where $m \ge 2$. Then for all $t \ge 1$,
\[
v(J(C_{2m})^{(t)}) = tm.
\]
\end{cor}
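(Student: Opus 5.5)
The plan is to mirror the proof of the path corollary. $C_{2m}$ is bipartite and vertex-transitive, so all edges $P$ give the same local $v$-number. Theorem~\ref{thm_local_bipartite} then yields
\[
v(J(C_{2m})^{(t)}) = v_P(J(C_{2m})) + (t-1)\gamma_{1,P}
\]
for any single edge $P$, and no minimization over $\operatorname{Ass}$ is needed. It therefore suffices to show that $v_P(J(C_{2m})) = m$ and $\gamma_{1,P} = m$.

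For $v_P(J(C_{2m}))$, apply Lemma~\ref{lem_v_1} with $P=\{1,2\}$. Both endpoints have degree $2$, and $N(1)\cap N(2)=\emptyset$ since $2m\ge 4$ (no triangles). The closed neighborhood $N[\{1,2\}]=\{2m,1,2,3\}$ has four vertices. When $m\ge 3$, the graph $G - N[\{1,2\}]$ is the path on vertices $4,\ldots,2m-1$, which has $2m-4$ vertices, so $\tau = m-2$. This gives $v_P = 2+2-2-0+(m-2) = m$. The case $m=2$ needs a separate check: the remainder is empty, so $\tau=0$ and $v_P = 2 = m$. I would use the same floor formula $\tau(P_k)=\lfloor k/2\rfloor$ that was used for paths.

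For $\gamma_{1,P}$, apply Lemma~\ref{lem_gamma_1}. We have $\deg(1)=2$, and $G-N[1]$ is the path on $2,\ldots$ — more precisely, on vertices $3,\ldots,2m-1$, which has $2m-3$ vertices. So $\tau = \lfloor (2m-3)/2\rfloor = m-2$, and each term equals $2+m-2=m$. By symmetry $\gamma_{1,P}=m$. For $m=2$ the remainder is a single vertex, so $\tau=0$ and $\gamma=2$, which is consistent.

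Substituting gives $m+(t-1)m = tm$. The only mild obstacle is handling the small case $m=2$, where the leftover subgraphs degenerate to empty graphs or isolated vertices. As a sanity check, the alternating $0/1$ vector is a $P$-tight $1$-cover of weight $m$, and $\tau(C_{2m})=m$, so $\gamma_{1,P}$ cannot be smaller than $m$.
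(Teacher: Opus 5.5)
Your proposal is correct and is essentially the paper's proof: reduce to the edge $P=\{1,2\}$ by symmetry, compute $v_P(J(C_{2m}))=\gamma_{1,P}=m$ via Lemmas~\ref{lem_v_1} and~\ref{lem_gamma_1}, and apply Theorem~\ref{thm_local_bipartite}. Your separate check of the case $m=2$ adds care the paper omits. One small wording point: the symmetry you actually need is edge-transitivity, which the rotations of $C_{2m}$ provide; vertex-transitivity alone would not suffice in general.
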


\begin{proof}
By symmetry, all edges of $C_{2m}$ are equivalent. Let $P = (x_1, x_2)$. By Lemma~\ref{lem_v_1}, we have 
\[
v_P(J(C_{2m})) = 2 + (m - 2) = m.
\]
By Lemma~\ref{lem_gamma_1}, we also have $\gamma_{1,P}(C_{2m}) = m$. The conclusion follows.
\end{proof}

\subsection{Local $v$-numbers of non-bipartite graphs}

\begin{lem}\label{lem_decomp}
Let $G$ be a simple graph, let $P = \{i,j\}$ be an edge of $G$, and let $t \ge 2$ be an integer. If $\mathbf{c} \in \mathbb{N}^n$ is a $P$-almost $(t+2)$-cover of $G$, then there exist a $P$-tight $2$-cover $\mathbf{b}$ and a $P$-almost $t$-cover $\mathbf{a}$ of $G$ such that $\mathbf{c} = \mathbf{a} + \mathbf{b}$.
\end{lem}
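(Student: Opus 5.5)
The plan is to imitate the threshold construction of Lemma~\ref{lem_decomp1}. Since $G$ need not be bipartite, I would threshold a single parameter on all vertices and let $\mathbf{b}$ take values in $\{0,1,2\}$, instead of using two sides. Write $c_i+c_j=t+1$, and $c_u+c_v\ge t+2$ for every other edge. Fix integers $L<H$ with $L+H=t+2$, and define
\[
b_u=\begin{cases}0 & \text{if } c_u\le L,\\ 1 & \text{if } L<c_u<H,\\ 2 & \text{if } c_u\ge H.\end{cases}
\]
Set $\mathbf{a}=\mathbf{c}-\mathbf{b}$. The work is to choose $L$ so that everything is consistent.

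\emph{Step 1 (nonnegativity and the cover condition off $P$).} We have $b_u\le c_u$ as soon as $L\ge 0$ and $H\ge 2$, so $\mathbf{a}\in\mathbb{N}^n$. Take an edge $\{u,v\}\ne P$ with $b_u=0$. Then $c_u\le L$, so $c_v\ge t+2-L=H$ and $b_v=2$. Otherwise both $b_u,b_v\ge1$. Hence $b_u+b_v\ge 2$ for every edge other than $P$.

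\emph{Step 2 ($\mathbf{a}$ is almost a $t$-cover off $P$).} For an edge $\{u,v\}\ne P$ I split by the value of $b_u+b_v$.
\begin{itemize}
\item[(i)] If $b_u+b_v=2$, then $a_u+a_v\ge t+2-2=t$.
\item[(ii)] If $b_u+b_v=3$, then $c_u+c_v\ge H+(L+1)=t+3$, so $a_u+a_v\ge t$.
\item[(iii)] If $b_u+b_v=4$, then $c_u+c_v\ge 2H$, so I need $2H\ge t+4$, that is, $H\ge\lceil (t+4)/2\rceil$.
\end{itemize}
Combined with $L\ge0$, this leaves the admissible range $0\le L\le \lfloor t/2\rfloor$. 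Automatically $L<H$ in this range.

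\emph{Step 3 (tightness at $P$).} This is the main obstacle, and it is where the freedom in choosing $L$ is used, exactly as $r\neq c_i+1$ was used in Lemma~\ref{lem_decomp1}. I check that $b_i+b_j=2$ unless $L\in\{c_i,c_j\}$.
\begin{itemize}
\item[(a)] If $L<c_i,c_j<H$, both values are $1$.
\item[(b)] If $c_i\ge H$, then $c_j\le t+1-H=L-1$, so $b_i+b_j=2+0$.
\item[(c)] If $c_i<L$, then $c_j>t+1-L=H-1$, so $b_i+b_j=0+2$.
\item[(d)] If $c_i=L$, then $c_j=H-1$ and the sum is $1$, which is the only failure.
\end{itemize}
Case (d) and its mirror with $c_j=L$ are the only bad cases. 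Since $c_i+c_j=t+1>2\lfloor t/2\rfloor$, at most one of $c_i,c_j$ lies in $[0,\lfloor t/2\rfloor]$. So at most one value of $L$ in the admissible range is forbidden. Because $t\ge2$, the range contains at least the two values $0$ and $1$, so a good $L$ exists.

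With this choice, $\mathbf{b}$ is a $P$-tight $2$-cover. Also $a_i+a_j=t+1-2=t-1$, and together with Step 2 this shows $\mathbf{a}$ is a $P$-almost $t$-cover with $\mathbf{c}=\mathbf{a}+\mathbf{b}$. Theorem~\ref{thm_local_nonbipartite} then follows from this lemma by the same two-inequality argument as in the proof of Theorem~\ref{thm_local_bipartite}, using Lemma~\ref{lem_int_prog}.
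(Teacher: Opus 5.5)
Your proof is correct, and it takes a genuinely different route from the paper.

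The paper splits into two cases.
\begin{itemize}
\item When $c_i,c_j>0$, it follows \cite{HHT}: it takes $A=\{u : c_u=0\}$ and sets $\mathbf{b}=\mathbf{b}_A$, with value $0$ on $A$, $2$ on $N_G(A)$, and $1$ elsewhere.
\item When $c_i=0$, it builds $Y\subseteq\{c\le 1\}$ and $X=N_G(Y)\subseteq\{c\ge t+1\}$ by a recursive closure along the tight edges $c_u+c_v=t+2$, and sets $\mathbf{b}=\mathbf{b}_Y$.
\end{itemize}

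Your one-parameter threshold construction handles both cases at once. In effect you only ever need $L\in\{0,1\}$: $L=0$ works unless $0\in\{c_i,c_j\}$, and $L=1$ works unless $1\in\{c_i,c_j\}$. Both cannot fail, since $c_i+c_j=t+1\ge 3$. These two choices line up exactly with the paper's two cases.

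This shows the recursive closure in the paper's second case is unnecessary. The crude thresholds $c_u\le 1\Rightarrow b_u=0$ and $c_u\ge t+1\Rightarrow b_u=2$ already suffice. Your edge estimates only use $L+H=t+2$ and $2H\ge t+4$.

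Your version has three advantages:
\begin{itemize}
\item It is shorter.
\item It is the exact analogue of Lemma~\ref{lem_decomp1}, with the forbidden value $L\in\{c_i,c_j\}$ playing the role of $r\ne c_i+1$.
\item It isolates precisely where $t\ge 2$ enters: you need two admissible thresholds $0,1\le\lfloor t/2\rfloor$, and you need $2H\ge t+4$.
\end{itemize}

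The paper's construction gives one extra property. Its $\mathbf{b}$ is always of the form $\mathbf{b}_A$ for a $P$-admissible independent set $A$, matching the description of $\gamma_{2,P}$ in Lemma~\ref{lem_2_P}. Yours need not be, since a vertex with $c_v\ge H$ and no neighbor with $c\le L$ still gets $b_v=2$. This does not matter for Theorem~\ref{thm_local_nonbipartite}, which only uses $|\mathbf{b}|\ge\gamma_{2,P}$.
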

\begin{proof}
We consider two cases. 
\medskip 

\noindent{\textbf{Case 1.}} Assume that $c_i, c_j > 0$. We follow an argument similar to that in \cite[Theorem 5.1]{HHT}. Let $A = \{u \in [n] \mid c_u = 0\}$ and $B = N_G(A)$. Since $c_i, c_j > 0$, we have $i, j \notin A$. Furthermore, $i, j \notin B$. Indeed, if $i \in B$, then there exists $u \in A$ such that $i \in N_G(u)$. Since $\mathbf{c}$ is a $P$-almost $(t+2)$-cover of $G$, we would have $c_i + c_u \ge t+2$. Since $c_u = 0$, this implies $c_i \ge t+2$, which contradicts the assumption that $c_i + c_j = t+1$. 

We claim that $c_v \ge t+2$ for all $v \in B$. Indeed, for any $v \in B$, there exists $u \in A$ such that $\{u,v\} \in E(G)$. Since $i, j \notin A$, this edge is distinct from $\{i,j\}$. Therefore, by the definition of a $P$-almost cover, we deduce that $c_u + c_v \ge t+2$. Since $c_u = 0$, it follows that $c_v \ge t+2$. In particular, we have $A \cap B = \emptyset$, and $A$ is an independent set of $G$.

Now, we define $\mathbf{b} \in \mathbb{N}^n$ as follows: $b_u = 0$ for $u \in A$, $b_v = 2$ for $v \in B$, and $b_k = 1$ for $k \in C = [n] \setminus (A \cup B)$. Clearly, $b_k \le c_k$ for all $k$ by the definitions of $\mathbf{b}, A,$ and $B$. Set $\mathbf{a} = \mathbf{c} - \mathbf{b}$. Since $i, j \in C$, by definition we have $b_i = b_j = 1$. We now prove that $\mathbf{a}$ is a $P$-almost $t$-cover of $G$. Indeed, $a_i + a_j = (c_i + c_j) - (b_i + b_j) = (t+1) - 2 = t-1$. 

Now, for any other edge $\{u,v\}$ of $G$, we consider the following cases:
\begin{itemize}
    \item If $u \in A$ and $v \in B$, then $c_u + c_v \ge t+2$ and $b_u + b_v = 2$; hence, $a_u + a_v \ge t$.
    \item If $u \in A$ and $v \in C$, then $b_u + b_v = 1$ and $c_u + c_v \ge t+1$; hence, $a_u + a_v \ge t$.
    \item If $u \in B$ and $v \in B$, then $b_u + b_v = 4$ and $c_u + c_v \ge (t+2) + (t+2)$; hence, $a_u + a_v \ge 2t \ge t$.
    \item If $u \in B$ and $v \in C$, then $b_u + b_v = 3$. Since $c_u \ge t+2$ and $c_v \ge 1$, we have $c_u + c_v \ge t+3$, and hence $a_u + a_v \ge t$.
    \item If $u \in C$ and $v \in C$, then $b_u + b_v = 2$; hence, $a_u + a_v = (c_u + c_v) - 2 \ge (t+2) - 2 = t$.
\end{itemize}
This proves that $\mathbf{a}$ is a $P$-almost $t$-cover of $G$.

\medskip 

\noindent\textbf{Case 2.} One of $c_i, c_j = 0$. Without loss of generality, assume that $c_i = 0$, which implies $c_j = t+1$. Let $T_c$ be the subgraph of $G$ consisting of edges $\{u,v\}$ such that $c_u + c_v = t+2$. We define the following sets of vertices of $G$ recursively as follows:
\[
Y_0 = \{ u \mid c_u = 0 \}, \quad X_0 = N_G(Y_0), \quad Y_{r+1} = Y_r \cup N_{T_c}(X_r), \quad X_{r+1} = X_r \cup N_G(Y_{r+1}).
\]
Finally, set $X = \bigcup_{r \ge 0} X_r$ and $Y = \bigcup_{r \ge 0} Y_r$.

Since $G$ is finite, these unions are finite. We now prove the following claim: if $x \in X$, then $c_x \ge t+1$; if $y \in Y$, then $c_y \le 1$. We prove this by induction on $r$ for each layer of $X$ and $Y$.

First, by definition, if $y \in Y_0$, then $c_y = 0$. For $x \in X_0$, there exists a neighbor $y_0 \in Y_0$; hence, $c_x + c_{y_0} \ge t+1$, which implies $c_x \ge t+1$. Now, assume that $y \in Y_{r+1} \setminus Y_r$. Then there exists $x_r \in X_r$ such that $\{x_r, y\} \in E(T_c)$. In particular, $c_{x_r} + c_y = t+2$. Since $c_{x_r} \ge t+1$ by the inductive hypothesis, it follows that $c_y \le 1$. Finally, assume that $x \in X_{r+1} \setminus X_r$. Then there exists $y_{r+1} \in Y_{r+1}$ such that $\{x, y_{r+1}\} \in E(G)$. If this edge is $\{i,j\}$, then $x = j$ and $c_x = t+1$. Otherwise, $c_x + c_{y_{r+1}} \ge t+2$. Since $c_{y_{r+1}} \le 1$, we obtain $c_x \ge t+1$.

In particular, $X \cap Y = \emptyset$. Let $R = V(G) \setminus (X \cup Y)$. We define $\mathbf{b}$ as follows:
\[
b_v = \begin{cases}
    2 & \text{if } v \in X, \\
    0 & \text{if } v \in Y, \\
    1 & \text{if } v \in R.
\end{cases}
\]
Since $c_x \ge t+1 \ge 2$ for $x \in X$ and $c_r \ge 1$ for all $r \notin Y$, we have $b_v \le c_v$ for all $v \in V(G)$. Set $\mathbf{a} = \mathbf{c} - \mathbf{b}$. We now prove that $\mathbf{b}$ is a $P$-tight $2$-cover and $\mathbf{a}$ is a $P$-almost $t$-cover. 

Indeed, we have $i \in Y$ and $j \in X$; hence, $b_i + b_j = 2$. It remains to show that for any other edge $\{u,v\}$ of $G$, we have $2 \le b_u + b_v \le c_u + c_v - t$. Since all neighbors of vertices in $Y$ belong to $X$, there are no edges between $Y$ and $Y$ or between $Y$ and $R$. Thus, we consider the following cases:

\begin{itemize}
    \item $u \in X, v \in Y$: Then $b_u + b_v = 2$, and since $\{u,v\} \neq \{i,j\}$, we have $c_u + c_v \ge t+2$.
    \item $u \in X, v \in R$: Since all $T_c$-neighbors of vertices in $X$ belong to $Y$, this edge is not in $T_c$; in other words, $c_u + c_v \ge t+3$. Here, we have $b_u + b_v = 3 \le c_u + c_v - t$.
    \item $u \in R, v \in R$: Then $b_u + b_v = 2$, and $c_u + c_v \ge t+2$.
    \item $u \in X, v \in X$: By our claim, $c_u, c_v \ge t+1$. Hence, $c_u + c_v \ge 2t+2 \ge t + 4$ since $t \ge 2$. In this case, $b_u + b_v = 4 \le c_u + c_v - t$.
\end{itemize}

This completes the proof of the lemma.
\end{proof}

\begin{proof}[Proof of Theorem~\ref{thm_local_nonbipartite}]
Let $\mathbf{b}$ be a $P$-tight $2$-cover of $G$ such that $|\mathbf{b}| = \gamma_{2,P}$. By Lemma~\ref{lem_int_prog}, there exists a $P$-almost $t$-cover $\mathbf{a}$ of $G$ such that $v_P(J(G)^{(t)}) = |\mathbf{a}|$. The sum $\mathbf{a} + \mathbf{b}$ is then a $P$-almost $(t+2)$-cover of $G$. Thus, by Lemma~\ref{lem_int_prog},
\[
v_P(J(G)^{(t+2)}) \le |\mathbf{a}| + |\mathbf{b}| = v_P(J(G)^{(t)}) + \gamma_{2,P}.
\]

Conversely, let $\mathbf{c}$ be a $P$-almost $(t+2)$-cover of $G$ such that $|\mathbf{c}| = v_P(J(G)^{(t+2)})$. By Lemma~\ref{lem_decomp}, we can write $\mathbf{c} = \mathbf{a} + \mathbf{b}$, where $\mathbf{b}$ is a $P$-tight $2$-cover and $\mathbf{a}$ is a $P$-almost $t$-cover of $G$. By Lemma~\ref{lem_int_prog},
\[
v_P(J(G)^{(t+2)}) = |\mathbf{a}| + |\mathbf{b}| \ge v_P(J(G)^{(t)}) + \gamma_{2,P}.
\]
The conclusion follows.
\end{proof}

\begin{cor}
Let $C_{2m+1}$ be an odd cycle on $2m+1$ vertices. Then for all $t \ge 1$,
\[
v(J(C_{2m+1})^{(t)}) = \begin{cases}
    (2m+1)q & \text{if } t = 2q, \\
    (2m+1)q + m & \text{if } t = 2q+1.
\end{cases}
\]
\end{cor}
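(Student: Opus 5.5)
By the symmetry of $C_{2m+1}$ all edges are equivalent, so $v = v_P$ for $P=(x_1,x_2)$. The plan is to compute $v_P(J(C_{2m+1})^{(t)})$ for $t=1$ and $t=2$ directly. We then propagate to all $t$ using Theorem~\ref{thm_local_nonbipartite} together with the value of $\gamma_{2,P}$. Throughout, we use Lemma~\ref{lem_int_prog} to identify local $v$-numbers with minimal weights of $P$-almost covers.

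\textbf{Initial values.} For $t=1$, Lemma~\ref{lem_v_1} gives
\[
v_P = 2+2-2-0+\tau(P_{2m-3}) = 2+(m-2) = m,
\]
valid for $m\ge 2$. For $m=1$ (a triangle) we check directly that the value is $1$. This matches $(2m+1)\cdot 0 + m$.

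\textbf{The value $\gamma_{2,P}$.} A $P$-tight $2$-cover has weight at least $2m+1$: sum the inequalities $a_u+a_v\ge 2$ over all $2m+1$ edges to get $2|\mathbf a|\ge 2(2m+1)$. The all-ones vector is a $P$-tight $2$-cover, so $\gamma_{2,P}=2m+1$.

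\textbf{The case $t=2$.} Let $\mathbf a$ be a $P$-almost $2$-cover. Summing the edge inequalities gives $2|\mathbf a| \ge 2\cdot 2m + 1$, so $|\mathbf a|\ge 2m+1$ by integrality. For the upper bound, take $a_1=0$, $a_2=1$, and then alternate $2,0,2,0,\dots$ along the path $3,4,\dots,2m+1$ so that each remaining edge sum is at least $2$. I expect this step, producing an explicit optimal vector, to be the main obstacle, and I would settle it by a direct parity check. For instance, when $m=1$ the vector $(0,1,2)$ has weight $3$. In general a cleaner construction is the all-ones vector with $a_1$ lowered to $0$ and the neighbor $2m+1$ of vertex $1$ raised to $2$. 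This works because edges at vertex $1$ other than $\{1,2\}$ only involve $2m+1$. It has weight $2m+1$.

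\textbf{Odd $t$.} The same summation argument gives a general lower bound. For $t=2q+1$ it yields $2|\mathbf a|\ge (2m+1)t-1$, hence $|\mathbf a|\ge (2m+1)q+m$, which is consistent with the formula. We cannot reach $t=3$ from $t=1$, since Theorem~\ref{thm_local_nonbipartite} requires $t\ge 2$. Instead we handle $t=3$ by construction: add the all-ones $P$-tight $2$-cover to an optimal $P$-almost $1$-cover to get an upper bound of $m+(2m+1)$, and combine this with the summation lower bound $(2m+1)+m$.

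\textbf{Conclusion.} Theorem~\ref{thm_local_nonbipartite} with $\gamma_{2,P}=2m+1$ then propagates both parities from the base cases $t=2$ and $t=3$, giving the stated formula for all $t\ge 1$.
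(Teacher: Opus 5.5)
Your argument is correct, but it is organized differently from the paper's proof. The paper does not use Theorem~\ref{thm_local_nonbipartite} or Lemma~\ref{lem_v_1} here. It sums the edge inequalities to get $|\mathbf{a}|\ge\lceil((2m+1)t-1)/2\rceil$ for every $t$ at once. It then exhibits, for each $t$, a $P$-almost $t$-cover attaining this bound: $(q-1,q,\ldots,q,q+1)$ for $t=2q$, and $(q,q,q+1,q,q+1,\ldots,q+1)$ for $t=2q+1$. You instead compute the base cases $t=1,2,3$ and $\gamma_{2,P}=2m+1$, and let Theorem~\ref{thm_local_nonbipartite} propagate each parity class. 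Your route illustrates the quasi-linearity theorem with an explicit period-two slope. However, it makes the corollary depend on the nontrivial decomposition Lemma~\ref{lem_decomp}, and that dependence is unnecessary. You already have the summation lower bound for all $t$; for $t=2q$ it reads $|\mathbf{a}|\ge(2m+1)q$. Adding the all-ones vector to a $P$-almost $t$-cover always gives a $P$-almost $(t+2)$-cover. So adding $q-1$ copies of it to your optimal $t=2$ vector $(0,1,\ldots,1,2)$ gives a matching upper bound for $t=2q$. Likewise, adding $q$ copies to the optimal $t=1$ vector $(0,0,1,0,1,\ldots,1)$ gives a matching upper bound for $t=2q+1$. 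These sums are exactly the paper's vectors. The direct approach also treats $m=1$ uniformly, so your separate check of the triangle at $t=1$ is not needed.
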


\begin{proof}
By symmetry, all edges of $C_{2m+1}$ are equivalent. Let $P = (x_1, x_2)$ and set $n = 2m+1$. Let $\mathbf{a}$ be a $P$-almost $t$-cover of $C_n$. Then $a_1 + a_2 \ge t-1$, while $a_i + a_{i+1} \ge t$ for all $i \neq 1$. Summing over all edges of $C_n$, we deduce that $2|\mathbf{a}| \ge nt - 1$, and hence $|\mathbf{a}| \ge \left\lceil \frac{nt-1}{2} \right\rceil$. Furthermore, this optimal bound is achieved as follows:

\begin{itemize}
    \item When $t = 2q$, set $\mathbf{a} = (q-1, q, q, \ldots, q, q+1)$.
    \item When $t = 2q+1$, set $\mathbf{a} = (q, q, q+1, q, q+1, \ldots, q+1)$.
\end{itemize}
The conclusion follows.
\end{proof}

We now give a formula for $\gamma_{2,P}$, the common leading coefficient of the quasi-linear functions.

\begin{defn}
Let $P = \{i,j\}$ be an edge of $G$. An independent set $A$ of $G$ is called $P$-admissible if either $i \in A$ or $j \in A$, or $\{i,j\} \cap (A \cup N_G(A)) = \emptyset$.
\end{defn}

\begin{lem}\label{lem_2_P}
Let $G$ be a simple graph and let $P = \{i,j\}$ be an edge of $G$. Then
\[
\gamma_{2,P} = n - \max \{ |A| - |N_G(A)| \mid A \text{ is a } P\text{-admissible independent set} \}.
\]
\end{lem}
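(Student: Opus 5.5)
We prove the two inequalities separately. Throughout, a $P$-tight $2$-cover $\mathbf{b}$ of minimum weight may be assumed to have entries in $\{0,1,2\}$: lowering any entry larger than $2$ down to $2$ keeps every edge sum at least $2$. It also keeps $b_i+b_j=2$, since an entry above $2$ cannot occur at $i$ or $j$ when $b_i+b_j=2$.

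\emph{Upper bound.} Given a $P$-admissible independent set $A$, we set $b_u=0$ for $u\in A$, $b_v=2$ for $v\in N_G(A)$, and $b_k=1$ otherwise. Since $A$ is independent, $A\cap N_G(A)=\emptyset$, so $|\mathbf{b}|=2|N_G(A)|+(n-|A|-|N_G(A)|)=n-(|A|-|N_G(A)|)$. We check that $\mathbf{b}$ is a $2$-cover exactly as in the case analysis of Case~1 in Lemma~\ref{lem_decomp}. There are no edges inside $A$, an edge between $A$ and $C$ is impossible because every neighbour of $A$ lies in $N_G(A)$, and every other edge has sum at least $2$. For tightness we use admissibility. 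If $i\in A$, then $j\in N_G(A)$, so $b_i+b_j=0+2=2$, and symmetrically if $j\in A$. If $\{i,j\}$ avoids $A\cup N_G(A)$, then $b_i=b_j=1$. Note that we cannot have $i,j\in N_G(A)$ with neither in $A$, which would give sum $4$; this is exactly what admissibility excludes. Hence $\gamma_{2,P}\le n-\max(\cdots)$.

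\emph{Lower bound.} Let $\mathbf{b}$ be a minimum-weight $P$-tight $2$-cover with entries in $\{0,1,2\}$. Let $A=\{u: b_u=0\}$. Then $A$ is independent, and every neighbour $v$ of $A$ has $b_v=2$, so $N_G(A)\subseteq\{v: b_v=2\}$. Therefore
\[
|\mathbf{b}|\ge 2|N_G(A)|+(n-|A|-|N_G(A)|)=n-(|A|-|N_G(A)|).
\]
It remains to check that $A$ is $P$-admissible. If $i\in A$ or $j\in A$, we are done. Otherwise $b_i,b_j\ge1$ with $b_i+b_j=2$, which forces $b_i=b_j=1$. Then neither $i$ nor $j$ lies in $N_G(A)$, since every vertex of $N_G(A)$ has value $2$. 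So $\{i,j\}\cap(A\cup N_G(A))=\emptyset$, and $A$ is admissible. Combining the two bounds gives equality.

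The only delicate point is the interplay between tightness and the value-$2$ vertices in the lower bound: we must rule out $i$ or $j$ lying in $N_G(A)$ without lying in $A$. This is handled by the observation $b_i=b_j=1$ above. I expect no further obstacle; the rest is the same bookkeeping as in Lemma~\ref{lem_decomp}, Case~1.
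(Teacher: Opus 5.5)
Your proof is correct and is essentially the paper's argument: the upper bound uses the same vector ($0$ on $A$, $2$ on $N_G(A)$, $1$ elsewhere), and the lower bound takes $A=\{v : b_v=0\}$ and checks admissibility by splitting into the cases $0\in\{b_i,b_j\}$ versus $b_i=b_j=1$. Two minor remarks, neither affecting correctness. Your preliminary reduction to entries in $\{0,1,2\}$ is unnecessary, since $b_v\ge 2$ on $N_G(A)$ is all you use. Your side comment also undersells admissibility: it rules out every case where neither $i$ nor $j$ is in $A$ but at least one lies in $N_G(A)$, and this includes the sum-$3$ case, not only $i,j\in N_G(A)$.
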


\begin{proof}
For a $P$-admissible independent set $A$, we define a vector $\mathbf{b}_A$ as follows:
\[
(\mathbf{b}_A)_v = \begin{cases} 
0 & \text{if } v \in A, \\ 
2 & \text{if } v \in N_G(A), \\ 
1 & \text{if } v \notin A \cup N_G(A). 
\end{cases}
\]
Then $\mathbf{b}_A$ is a $P$-tight $2$-cover of $G$. Hence, 
\[
\gamma_{2,P} \le 2|N_G(A)| + |V(G) \setminus (A \cup N_G(A))| = n - (|A| - |N_G(A)|).
\]

Conversely, let $\mathbf{b}$ be a $P$-tight $2$-cover of $G$. Then we have either $b_i = 0, b_j = 2$; $b_i = 2, b_j = 0$; or $b_i = b_j = 1$. Set $A = \{v \in V(G) \mid b_v = 0\}$. By symmetry, the first two cases are identical, so we consider two cases:

If $b_i = 0$ and $b_j = 2$, then $A$ is an independent set with $i \in A$; hence, $A$ is $P$-admissible. Clearly, $b_v \ge 2$ for all $v \in N_G(A)$, and by definition, $b_v \ge 1$ for all $v \notin A \cup N_G(A)$. Thus, the minimal value is obtained precisely when $\mathbf{b} = \mathbf{b}_A$. 

When $b_i = b_j = 1$, we have $\{i,j\} \cap (A \cup N_G(A)) = \emptyset$, so $A$ is again $P$-admissible. Once more, the minimum is achieved when $\mathbf{b} = \mathbf{b}_A$. The conclusion follows.
\end{proof}

\section{Global $v$-number of symbolic powers of cover ideals}

\begin{lem} \label{lem:2} 
Let $G$ be a bipartite graph on $n$ vertices. Then $v(J(G)^{(t)})$ becomes a linear function in $t$ for all $t \ge n-2$.
\end{lem}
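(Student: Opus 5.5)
By Theorem~\ref{thm_local_bipartite}, each local function is linear: $v_P(J(G)^{(t)}) = \alpha_P + (t-1)\beta_P$ for all $t\ge1$, where $\alpha_P = v_P(J(G))$ and $\beta_P=\gamma_{1,P}$. Since the associated primes of $J(G)^{(t)}$ are exactly the edge ideals $(x_i,x_j)$ with $\{i,j\}\in E(G)$, the global number is
\[
v(J(G)^{(t)})=\min_{P\in E(G)}\bigl(\alpha_P+(t-1)\beta_P\bigr),
\]
a minimum of finitely many linear functions. Such a minimum is concave and piecewise linear. It becomes linear once $t$ exceeds the last crossing point of lines that can realize the minimum. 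From that point on, the minimum is attained by a line of smallest slope $\beta_{\min}=\min_P\beta_P$, and among those, by one with smallest intercept.

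So it suffices to show the following: if $\beta_P>\beta_Q=\beta_{\min}$, then $\alpha_P+(t-1)\beta_P\ge \alpha_Q+(t-1)\beta_Q$ for all $t\ge n-2$. Because slopes are integers, $\beta_P-\beta_Q\ge1$. It is therefore enough to prove
\[
\alpha_Q-\alpha_P\le n-3
\]
for all edges $P,Q$. That bound gives $(t-1)(\beta_P-\beta_Q)\ge t-1\ge n-3\ge \alpha_Q-\alpha_P$ when $t\ge n-2$.

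The key step is to bound $\alpha_Q$ from above and $\alpha_P$ from below, using Lemma~\ref{lem_v_1} or directly the $P$-almost $1$-cover description from Lemma~\ref{lem_int_prog}. For the lower bound, any $P$-almost $1$-cover must be positive on the $\ge 0$ other neighbours of $i$ and $j$. More crudely, $\alpha_P\ge \deg(i)+\deg(j)-2\ge0$ when there are no common neighbours, which is automatic in a bipartite graph. For the upper bound, the vector equal to $0$ on $Q=\{k,l\}$ and $1$ elsewhere is a $Q$-almost $1$-cover, so $\alpha_Q\le n-2$. This naive bound only yields $\alpha_Q-\alpha_P\le n-2$, giving linearity for $t\ge n-1$. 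To gain the missing $1$, I would split into cases:
\begin{itemize}
\item If $\alpha_P\ge1$, we are done.
\item If $\alpha_P=0$, then $\deg(i)=\deg(j)=1$ and $\tau(G-N[\{i,j\}])=0$, so $P$ is an isolated edge component.
\end{itemize}
In the second case I would argue that $\alpha_Q\le n-3$ for every other edge $Q$. The vertices $i,j$ lie outside $N[Q]$, and the edge $\{i,j\}$ needs only one of its endpoints set to $1$, so one can save a unit. If instead $Q=P$ is the only edge with the minimal slope situation, the claim is trivial.

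The main obstacle is this off-by-one sharpening. In particular I must handle the degenerate case $n\le 3$, where $G$ is a single edge or a path $P_3$ and there is only one edge class. I must also verify carefully that the intercept difference never reaches $n-2$ when the slopes differ. If the direct case analysis fails, I would instead compare with a fixed minimal-slope edge $Q$ and bound $\alpha_Q\le \tau$-type quantities via Lemma~\ref{lem_v_1}, using $\alpha_Q\le n-2-\tau$-corrections.
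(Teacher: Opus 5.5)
Your proposal is correct and follows essentially the same route as the paper. Both choose a minimal-slope line $Q$ with smallest intercept, then combine $\alpha_Q=v_Q(J(G))\le n-2$ (from the vector that is $0$ on $Q$ and $1$ elsewhere), $\alpha_P\ge 1$, and $\beta_P-\beta_Q\ge 1$. Your case $\alpha_P=0$ is in fact vacuous: $\tau(G-\{i,j\})=0$ means $P$ is the only edge of $G$, or equivalently $v_P(I)=0$ forces $I=P$. That is why the paper simply asserts $f_P(1)\ge 1$.
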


\begin{proof} 
For each prime ideal $P = (x_i,x_j)$ corresponding to an edge $\{i,j\}$ of $G$, we set $f_P(t) = v_P(J(G)^{(t)})$. Note that $\tau(G) = \min_{P} \gamma_{1,P}$. Let $Q$ be an associated prime ideal such that $\gamma_{1,Q} = \tau(G)$ and $f_Q(1)$ is as small as possible among all such choices of $Q$.

Now, for any other associated prime $P$ where $\gamma_{1,P} > \gamma_{1,Q}$, we have 
\[
f_P(t) - f_Q(t) = (f_P(1) - f_Q(1)) + (t-1)(\gamma_{1,P} - \gamma_{1,Q}).
\]
By Lemma~\ref{lem_v_1}, $f_Q(1) \le n-2$. Furthermore, $f_P(1) \ge 1$. Consequently, this difference is non-negative for all $t \ge n-2$. The conclusion follows.
\end{proof}

We have not found any example where the $v$-function of powers of cover ideals of a bipartite graph is not linear for all $t \ge 1$. We thus pose the following question.

\begin{quest} Let $G$ be a bipartite graph. Is $v(J(G)^{t})$ a linear function of $t$ for all $t \ge 1$?
\end{quest}

\begin{lem} \label{lem:3} 
Let $G$ be a nonbipartite graph on $n$ vertices. Then $v(J(G)^{(t)})$ becomes a quasi-linear function in $t$ with period two for all $t \ge 4n-5$.
\end{lem}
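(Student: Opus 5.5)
We mimic the proof of Lemma~\ref{lem:2}, working separately on each parity class of $t$. For each edge $P=\{i,j\}$ set $f_P(t)=v_P(J(G)^{(t)})$. By Theorem~\ref{thm_local_nonbipartite}, for $t\ge 2$ and fixed parity $\epsilon\in\{0,1\}$, writing $t=2s+\epsilon$ with $2s+\epsilon\ge 2$, we have $f_P(2s+\epsilon)=f_P(t_\epsilon)+(s-s_\epsilon)\gamma_{2,P}$, where $t_\epsilon\in\{2,3\}$ is the base value of that parity. Thus on each parity class every local function is linear in $s$, with slope $\gamma_{2,P}$ independent of parity. Set $\gamma=\min_P\gamma_{2,P}$. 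On each parity class, choose $Q_\epsilon$ with $\gamma_{2,Q_\epsilon}=\gamma$ minimizing $f_{Q_\epsilon}(t_\epsilon)$ among such primes. For every $P$ with $\gamma_{2,P}>\gamma$ we have
\[
f_P(t)-f_{Q_\epsilon}(t)=\bigl(f_P(t_\epsilon)-f_{Q_\epsilon}(t_\epsilon)\bigr)+\tfrac{t-t_\epsilon}{2}\bigl(\gamma_{2,P}-\gamma\bigr),
\]
and the gap in slopes is at least $1$. So it suffices to bound $f_{Q_\epsilon}(t_\epsilon)-f_P(t_\epsilon)\le D$ uniformly. The difference is then nonnegative once $(t-t_\epsilon)/2\ge D$, and from then on $v(J(G)^{(t)})=\min_{\gamma_{2,P}=\gamma}f_P(t)$ is linear of slope $\gamma$ on each parity class, i.e.\ quasi-linear of period two.

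The key step is to bound $f_Q(t_\epsilon)$ from above and $f_P(t_\epsilon)$ from below. For the lower bound I use $f_P(t)\ge 1$, or better $f_P(t)\ge t-1$, since $a_i+a_j=t-1$ for a $P$-almost $t$-cover. For the upper bound I will use Lemma~\ref{lem_int_prog} and exhibit explicit $Q$-almost covers. Take $Q=\{i,j\}$, put $a_i=0$, $a_j=t-1$, and give every other vertex weight $t$; this is a $Q$-almost $t$-cover of weight $(n-2)t+t-1=(n-1)t-1$. Hence $f_Q(2)\le 2n-3$ and $f_Q(3)\le 3n-4$. Then $D\le 3n-5$, giving $(t-3)/2\ge 3n-5$, i.e.\ $t\ge 6n-7$, which is weaker than the claimed $4n-5$. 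So the obstacle is sharpening the constants.

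To reach $t\ge 4n-5$, I would use the parity-$1$ class through $t=3$ and the parity-$0$ class through $t=2$ more carefully. The sharper plan is to bound $f_Q(t)-f_P(t)$ directly at some $t$ in terms of $n$ and $t$. A $P$-almost $t$-cover has weight at least roughly $(t-1)$ plus something, while $Q$ has weight at most $(n-1)t-1$, so the difference is at most about $(n-2)t$. Alternatively, apply the recursion from a smaller starting point: compare at $t_\epsilon\in\{2,3\}$ using the bounds $f_Q(2)\le 2n-3$, $f_P(2)\ge 1$, which gives the requirement $(t-2)/2\ge 2n-4$, i.e.\ $t\ge 4n-6$. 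For odd $t$, use $f_Q(3)\le f_Q(1)+\gamma_{2,Q}$. This is not guaranteed by Theorem~\ref{thm_local_nonbipartite}, which needs $t\ge2$, so instead I bound $f_Q(3)\le 3n-4$ against $f_P(3)\ge 2$, i.e.\ $D\le 3n-6$. I expect a refined estimate to be needed here: for instance $f_Q(3)\le f_Q(2)+\gamma_{1}$-type bounds, or taking $a_j=t-1$ and all other vertices weight $\lceil t/2\rceil$ where possible. Checking that some such construction gives $f_Q(3)-f_P(3)\le 2n-4$, and hence yields the threshold $4n-5$ for odd $t$, is the main technical point I would work out.
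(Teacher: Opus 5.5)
Your framework is the paper's: split by parity, use Theorem~\ref{thm_local_nonbipartite} to get slope $\gamma_{2,P}$ on each class from the base values $t=2,3$, fix a prime of minimal slope, and bound the base-value differences by explicit almost covers together with the trivial lower bound $f_P(t)\ge t-1$. Your even case is complete and agrees with the paper. Your cover $a_i=0$, $a_j=1$, $a_\ell=2$ is the paper's cover with $i$ and $j$ swapped. It gives $f_Q(2)\le 2n-3$, and with $f_P(2)\ge 1$ you get nonnegativity once $(t-2)/2\ge 2n-4$, i.e.\ $t\ge 4n-6$.

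\textbf{The gap is the odd class, which you leave open.} Your only upper bound there is $f_Q(3)\le 3n-4$. Together with $f_P(3)\ge 2$ this yields only $t\ge 6n-9$, not $4n-5$, so the statement is not proved as written.

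The missing ingredient is a cheaper $Q$-almost $3$-cover: take $a_i=a_j=1$ and $a_\ell=2$ for all $\ell\notin\{i,j\}$. Then $a_i+a_j=2$, every other edge through $i$ or $j$ has weight $3$, and every remaining edge has weight $4$, so $f_Q(3)\le 2n-2$. Your idea of setting $a_i=0$ fails because it forces every neighbor of $i$ up to weight $3$. With $f_P(3)\ge 2$ you get $f_Q(3)-f_P(3)\le 2n-4$. Hence for $t=2s+1$,
\[
f_P(t)-f_Q(t)\ \ge\ -(2n-4)+(s-1),
\]
which is nonnegative once $(t-3)/2\ge 2n-4$, i.e.\ $t\ge 4n-5$. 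Combined with the even case, this gives the lemma.

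The bound $2n-2$ cannot in general be sharpened to the $2n-3$ asserted in the paper's proof. For the triangle, $f_P(3)=4$ (minimize over $(a_i,a_j)\in\{(0,2),(1,1),(2,0)\}$). Also, the cover written there, $b_i=1$, $b_j=2$, has $b_i+b_j=3\neq 2$, so it is not a $P$-almost $3$-cover. It is exactly the pair $2\le f_P(3)\le 2n-2$ that produces the constant $4n-5$.
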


\begin{proof} 
For each prime ideal $P = (x_i,x_j)$ corresponding to an edge $\{i,j\}$ of $G$, we set $f_P(t) = v_P(J(G)^{(t)})$. Let $Q$ be an associated prime ideal such that $\gamma_{2,Q} = \min_{P} \gamma_{2,P}$ and $f_Q(2)$ is as small as possible among all such choices of $Q$.

Now, for any other associated prime $P$ where $\gamma_{2,P} > \gamma_{2,Q}$, by Theorem~\ref{thm_local_nonbipartite}, we have 
\[
f_P(2t) - f_Q(2t) = (f_P(2) - f_Q(2)) + (t-1)(\gamma_{2,P} - \gamma_{2,Q}).
\]
We first claim that $f_P(2) \le 2n-3$. Indeed, set $b_i = 1, b_j = 0$, and $b_\ell = 2$ for all $\ell \notin \{i,j\}$. Then $\mathbf{b}$ is a $P$-tight $2$-cover of $G$. Hence, $f_P(2) \le |\mathbf{b}| = 2n-3$. Since $f_Q(2) \ge 1$, this difference is non-negative for all $t \ge 2n-3$, i.e., $2t \ge 4n-6$.

Similarly, we have $2 \le f_P(3) \le 2n-3$ for all $P$. Indeed, setting $b_i = 1, b_j = 2$, and $b_\ell = 2$ for all $\ell \notin \{i,j\}$ yields $f_P(3) \le 2n-3$. Thus, the quasi-linear functions evaluated at odd steps also dominate for all $t \ge 4n-5$. The conclusion follows.
\end{proof}

\vspace{0.2cm}
\noindent {\bf Data Availability} Data sharing is not applicable to this article as no datasets were generated or analyzed during the current study.
\vspace{0.2cm}

\noindent {\bf Conflict of interest} There are no competing interests of either financial or personal nature.


\begin{thebibliography}{2}




\bibitem[BM]{BM}
P.~Biswas and M.~Mandal,
A study of v-number for some monomial ideals, {\em Collect. Math. } {\bf 76} (2025).


\bibitem[BMS]{BMS}
P.~Biswas, M.~Mandal, and K.~Saha,
Asymptotic Behaviour and Stability Index of v-Numbers of Graded Ideals, {\em Vietnam J. Math.} {\bf 62} (2026), 13. 



\bibitem[CHJV]{CHJV} T. Chau, T. Ha, A.V. Jayanthan, and T. Vu, {\it Comparing v-numbers of symbolic and ordinary powers of squarefree monomial ideals},  preprint.

\bibitem[C]{C}
A.~Conca,
A note on the v-invariant, {\em Proc. Am. Math. Soc.} {\bf 152} (2024), 2349--2351.



\bibitem[CSTVV]{CSTVV}
S. M. Cooper, A. Seceleanu, S. O. Tohaneanu, M. Vaz Pinto and R. H. Villarreal, Generalized minimum distance functions and algebraic invariants of Geramita ideals,
{\em Adv. in Appl. Math.} {\bf 112} (2020), 101940.



\bibitem[FM]{FM}
A.~Ficarra and P.~M.~Marques,
The v-function of powers of sum of ideals, {\em J. Algebr. Comb.} {\bf 62} (2025), 13. 



\bibitem[FS]{FS}
A.~Ficarra and E.~Sgroi,
Asymptotic behaviour of the v-number of homogeneous ideals, {\em Journal of Algebra} {\bf 704} (2026), 273--297.

\bibitem [HHT]{HHT} J. Herzog, T. Hibi and N. V. Trung, {\it Symbolic powers of monomial ideals and vertex cover algebras},  Adv. Math. {\bf 210}(1) (2007), 304 - 322.


\bibitem[KNS]{KNS}
M.~Kumar, R.~Nanduri, and K.~Saha,
The slope of the v-function and the Waldschmidt constant, {\em Journal of Pure and Applied Algebra} {\bf 229} (2025), 107881.


\bibitem[S]{S}
K.~Saha,
The v-Number and Castelnuovo-Mumford Regularity of Cover Ideals of Graphs, {\em International Mathematics Research Notices} {\bf 11} (2024), 9010--9019.

\end{thebibliography}
\end{document}